\documentclass{article}

\usepackage[utf8]{inputenc}
\usepackage{amsmath,amssymb,amsthm}
\usepackage[backend=bibtex,firstinits=true]{biblatex}
\addbibresource{gwtaut.bib}
\usepackage{hyperref}

\DeclareMathOperator{\ev}{ev}
\DeclareMathOperator{\sign}{sign}
\DeclareMathOperator{\Aut}{Aut}
\DeclareMathOperator{\codim}{codim}

\newcommand{\Mbar}{\overline M}
\newcommand{\CC}{\mathbb C}
\newcommand{\QQ}{\mathbb Q}
\newcommand{\ZZ}{\mathbb Z}
\newcommand{\PP}{\mathbb P}
\newcommand{\vir}{\mathrm{vir}}

\newtheorem{theorem}{Theorem}
\newtheorem{proposition}{Proposition}
\newtheorem{lemma}{Lemma}
\newtheorem{question}{Question}
\newtheorem*{corollary}{Corollary}
\newtheorem*{fact}{Fact}

\title{Gromov--Witten theory of target curves and the tautological
  ring}

\author{Felix Janda \thanks{Partially supported by the Swiss National
    Science Foundation grant SNF 200021\_143274}}

\begin{document}

\maketitle
\begin{abstract}
  In the Gromov--Witten theory of a target curve, we consider descendent
  integrals against the virtual fundamental class relative to the
  forgetful morphism to the moduli space of curves.
  We show that cohomology classes obtained in this way lie in the
  tautological ring.
\end{abstract}

\setcounter{section}{-1}
\section{Introduction}

Let $X$ be a nonsigular projective variety over $\CC$, let
$\Mbar_{g, n}(X, \beta)$ be the moduli space of stable maps to $X$ of
class $\beta$ and let
\begin{equation*}
  \pi\colon \Mbar_{g, n}(X, \beta) \to \Mbar_{g, n}
\end{equation*}
be the forgetful map to the moduli space of stable
curves.\footnote{See \cite{FuPa97} for an introduction to the moduli
  space of stable maps.}
The moduli space $\Mbar_{g, n}(X, \beta)$ possess a perfect
obstruction theory defining a virtual fundamental class
\begin{equation*}
  [\Mbar_{g, n}(X, \beta)]^\vir \in H_*(\Mbar_{g, n}(X, \beta), \QQ)
\end{equation*}
of expected dimension (\cite{BeFa97, LiTi98}).

The tautological rings $RH^*(\Mbar_{g, n})$ of $\Mbar_{g, n}$ are most
compactly defined (see \cite{FaPa05}) as the smallest system of
subrings of $H^*(\Mbar_{g, n})$\footnote{We will generally work with
  $\QQ$-valued homology and cohomology.}
stable under push-forward and pullback by the maps
\begin{itemize}
\item $\Mbar_{g, n + 1} \to \Mbar_{g, n}$ forgetting one of the
  markings,
\item
  $\Mbar_{g_1, n_1 + 1} \times \Mbar_{g_2, n_2 + 1} \to \Mbar_{g_1 +
    g_2, n_1 + n_2}$ gluing two curves at a point,
\item $\Mbar_{g - 1, n + 2} \to \Mbar_{g, n}$ gluing together two
  markings of a curve.
\end{itemize}
While this definition seems restrictive many geometric classes lie in
the tautological ring.

\begin{question}[Pandharipande \cite{FaPa05}]
  Does
  \begin{equation*}
    \pi_* [\Mbar_{g, n}(X, \beta)]^\vir \in  H_*(\Mbar_{g, n}) \cong H^*(\Mbar_{g, n})
  \end{equation*}
  lie inside $RH^*(\Mbar_{g, n})$ when $X$ is defined over the field
  $\overline\QQ$ of algebraic numbers?
\end{question}
This question can be answered affirmative for toric varieties by the
method of virtual localization \cite{GrPa99}.
We will show in this article that the answer is also ``yes'' in the
case when $X$ is a curve.
For this, it is convenient to generalize the question.

In the first direction of generalization, we also allow arbitrary
descendent insertions: For each marking there is an evaluation map
$\ev_i\colon \Mbar_{g, n}(X, \beta) \to X$.
Furthermore, for each marking let $\psi_i$ be the first Chern class of
the cotangent line bundle at marking $i$.
For any choice of $n$ cohomology classes
$\gamma_1, \dotsc, \gamma_n \in H^*(X)$ and non-negative integers
$k_1, \dotsc, k_n$, more general cohomology classes can be defined by
\begin{equation*}
  \pi_*\left(\prod_{i = 1}^n \psi_i^{k_i} \ev_i^*(\gamma_i) \cap
    [\Mbar_{g, n}(X, \beta)]^\vir\right) \in H^*(\Mbar_{g, n}).
\end{equation*}
We will call such a class a ``GW-class''.
Integrating a GW-class gives the corresponding usual Gromov--Witten
descendent invariant.
We can again ask whether GW-classes lie inside $RH^*(\Mbar_{g, n})$.

Another direction of generalization is possible via relative
Gromov--Witten theory \cite{LiRu01, Li02, IoPa03}, where for a smooth
variety $X$ together with a smooth divisor $D$, the moduli space
$\Mbar_{g, n}(X, \beta, \mu_1, \dotsc, \mu_m)$ of relative stable maps
is considered.
This moduli space is a compactification of the space of stable maps to
$X$ such that the preimage of $D$ is finite and the cohomology valued
partitions $\mu_1, \dotsc, \mu_m$ specify for each connected component
$D'$ of $D$ the ramification profile over $D'$ and the class of the
source curve in $D'$.
We will follow the convention that the preimages of $D$ are marked, so
that a projection map
\begin{equation*}
  \pi\colon \Mbar_{g, n}(X, \beta, \eta_1, \dotsc, \eta_m) \to \Mbar_{g, n + \ell(\eta)}
\end{equation*}
can be defined where $\ell(\eta)$ is the sum of the lengths of the
partitions $\eta_1, \dotsc, \eta_m$.
\begin{question}
  Does
  \begin{equation*}
    \pi_* \left(\prod_{i = 1}^n \psi_i^{k_i} \ev_i^*(\gamma_i) \cap [\Mbar_{g, n}(X, \beta, \mu_1, \dotsc, \mu_m)]^\vir\right) \in  H^*(\Mbar_{g, n + \ell(\eta)})
  \end{equation*}
  lie inside $RH^*(\Mbar_{g, n})$ when $X$ and $D$ are defined over
  $\overline\QQ$?
\end{question}

\smallskip

The main result of this paper is the following theorem.
\begin{theorem}
  \label{thm:thm}
  If $X$ is an algebraic curve and $D$ a collection of pairwise
  distinct points on $X$, all (relative) GW-classes lie in the
  tautological ring $RH_*(\Mbar_{g, n})$.
\end{theorem}
To say more about this result, we specialize the discussion to the case
when $X$ is a curve.

Recall that the cohomology of an algebraic curve $X$ of genus $h$ over
$\CC$ has a basis
\begin{equation*}
  \{1, \alpha_1, \dotsc, \alpha_h, \beta_1, \dotsc, \beta_h, \omega\}
\end{equation*}
such that $1$ is the identity of the cup product, $\omega$ is the
Poincaré dual of a point and the $\alpha_i \in H^{1, 0}(X, \CC)$ and
$\beta_i \in H^{0, 1}(X, \CC)$ form a symplectic basis of
$H^1(X, \CC)$, i.e.\ $\alpha_i \cup \beta_i = \omega$,
$\beta_i \cup \alpha_i = -\omega$ for all $i$, and all other cup
products vanish.

The special case of Theorem~\ref{thm:thm} when $X = \PP^1$ was proven
in \cite{FaPa05}.
A large part of our proof will be a reduction to this special case.
We also build on the series of articles \cite{OkPa06a, OkPa06b,
  OkPa06c}, which gives an effective way to calculate all relative
Gromov--Witten invariant of $X$.

\smallskip

We now give an overview of the proof of Theorem~\ref{thm:thm}.
If the GW-class has only even cohomology insertions, we can use the
degeneration formula to calculate the GW-class in terms of GW-classes
of $\PP^1$ relative to a point and by the results of \cite{FaPa05}
these are also tautological.
This will be done in Section~\ref{sec:even}.

In the presence of odd insertions new phenomena can occur.
For example we might obtain odd classes in $H^*(\Mbar_{g, n})$.
These can only be tautological if they vanish, since by definition
tautological classes are algebraic.
More generally, one might obtain classes inside a piece
$H^{i, j}(\Mbar_{g, n})$ of the Hodge diamond with $i \neq j$.
We call such classes \emph{unbalanced}.
\begin{corollary}
  All unbalanced GW-classes of curves vanish.
\end{corollary}
Actually we will first prove this corollary in
Section~\ref{sec:unbalanced} and use it as an input for the proof of
Theorem~\ref{thm:thm}.

The remaining GW-classes are \emph{balanced}.
In Section~\ref{sec:balanced} we will give an algorithm to calculate
such GW-classes in the presence of odd cohomology in terms of
GW-classes with only even insertions.
It is a straightforward generalization of the algorithm given in
\cite{OkPa06c}.

\smallskip

If there are odd insertions, we cannot use a degeneration formula to
reduce to the case of $\PP^1$.
Still, it is possible to deform $X$ into a chain of elliptic curves to
reduce to the genus 0 and genus 1 cases.
This is done in Section~\ref{sec:red}.
Therefore, starting from Section~\ref{sec:rel}, we will assume $X$ to
be of genus one.

As in \cite{OkPa06c}, we will use the following properties of
Gromov--Witten theory to relate GW-classes with odd insertions to those
with only even insertions.
\begin{itemize}
\item algebracity of the virtual fundamental classes
\item invariance under monodromy transformations of $X$
\item degeneration formulae
\item vanishing relations from the group structure on an elliptic curve
\end{itemize}

We will study relations coming from the monodromy invariance of
Gromov--Witten theory and the group structure of an elliptic curve in
Section~\ref{sec:mono} and \ref{sec:ell}, respectively.

For the proof of the Corollary, we will only need the results from
Section~\ref{sec:red}, \ref{sec:mono} and \ref{sec:unbalanced}.
It is even possible to adapt the proof so that the use the reduction
to genus 1 is not necessary.
Its proof is the main new part of this article.

We have tried to apply the analogous methods in the case when $X$ is a
quintic surface but they do not seem to suffice in this case.

\subsection*{Notations and Conventions}

Because of our extensive use of the degeneration formula, we will
always allow our source curves to be disconnected.
So we will always work with disconnected Gromov--Witten invariants,
GW-classes and a tautological ring of not necessarily connected
curves.
These can however be related to their connected counterparts in a
purely combinatorial fashion.

The only possible curve classes of $X$ are multiples of the class of
$X$.
We will write $\Mbar_{g, n}(X, d)$ for the space of stable maps of
curve class $d[X]$ or, in other words, of degree $d$.

We will use the notation
\begin{equation*}
  \left\langle \tau_{k_1}(\gamma_1) \dots \tau_{k_n}(\gamma_n) | \eta\right\rangle^X_{g, d} := \int_{[\Mbar_{g, n}(X, \eta)]^\vir} \prod_{i = 1}^n \psi_i^{k_i} \ev_i^*(\gamma_i)
\end{equation*}
for relative Gromov--Witten invariants where
$\eta = (\eta_1, \dotsc, \eta_m)$ is a collection of partitions.
For GW-classes we use the analogous but non-standard notation
\begin{multline*}
  [ \tau_{k_1}(\gamma_1) \dots \tau_{k_n}(\gamma_n) | \eta]^X_r := \\
  \pi_*\left(\prod_{i = 1}^n \psi_i^{k_i} \ev_i^*(\gamma_i) \cap
    [\Mbar_{g, n}(X, \eta)]^\vir\right) \in
  H_{2r}(\Mbar_{g, n + \ell(\eta)}).
\end{multline*}
We have left out the degree $d$ since it is the size of any of the
usual partitions $\eta_1, \dotsc, \eta_m$.
The notation also does not specify the genus $g$ of the source curve
since it is determined by the formula
\begin{equation*}
  r = 2g - 2 + n + d(2 - 2h) - \sum_{i = 1}^n (k_i + \codim(\gamma_i)) - \sum_{i = 1}^m (d - \ell(\eta_i)).
\end{equation*}
If this would lead to a half-integer value of $g$, we define the
GW-class to be zero.

\subsection*{Acknowledgments}

This work was carried out while being a PhD student at ETH Zürich
under the supervision of R.~Pandharipande.
I would like to thank him for the introduction to the problem, his
support and many helpful discussions.

\section{Even classes}
\label{sec:even}

We consider the computation of GW-classes of a curve $X$ with only even
insertions.

There is a nonsingular family $X_t$ of curves of genus $h > 0$ over
$\CC$ such that $X_t \cong X$ for $t \neq 0$ and $X_0$ is an
irreducible curve of geometric genus $h - 1$ with a node.
The degeneration formula relates the GW-classes of $X$ to the
GW-classes of the normalization $\tilde X_0$ of $X_0$ relative to the
two preimages of the marked point.
It is important to note that the even classes of $X$ can be lifted to
$\tilde X_0$.
All of this discussion generalizes to the situation of $X$ relative to
marked points $q_1, \dotsc, q_m$.

Let
\begin{equation*}
  M = \prod_{h \in H} \tau_{o_h}(1) \prod_{h' \in H'} \tau_{o'_{h'}}(\omega)
\end{equation*}
be a monomial in insertions of even classes and let
$\eta_1, \dotsc, \eta_m$ be choices of splittings at the relative
points.
Since the target curve is irreducible the degeneration formula
\cite{Li02} in this case says that
\begin{equation*}
  \left[ M | \eta_1, \dotsc, \eta_m \right]_r^X = \sum_{|\mu| = d} \mathfrak z(\mu)\ \iota_* \left[ M | \eta_1, \dotsc, \eta_m, \mu, \mu\right]_r^{\tilde X_0},
\end{equation*}
where the sum is over partitions
$\mu = (\mu_1, \dotsc, \mu_{\ell(\mu)})$, and the automorphism factor
$\mathfrak z(\mu)$ is defined by
\begin{equation*}
  \mathfrak z(\mu) = |\Aut(\mu)| \prod_{i = 1}^{\ell(\mu)} \mu_i,
\end{equation*}
and $\iota$ is the map gluing together the last two markings.

By using this formula repeatedly, we can reduce the genus $h$ until we
arrive at the case of $X = \PP^1$ relative to $q_1, \dotsc, q_n$,
which has been studied in \cite{FaPa05}.
This implies that Theorem~\ref{thm:thm} is true in the case that all
$\gamma_i$ are even classes.

\section{Reduction to genus one}
\label{sec:red}

Recall that we have chosen a symplectic basis
$\alpha_i, \beta_i \in H^1(X, \CC)$.
There is a deformation $Y \to \PP^1$ of $X$ into
$\tilde X = E \cup X'$, a curve of genus one and a curve of genus
$h - 1$ connected at a node $p$.
Moreover, the symplectic basis of $H^1(X, \CC)$ can be lifted to $Y$
such that over $\tilde X$ the classes $\alpha_1, \beta_1$ give a
symplectic basis of $H^1(E, \CC)$ and the other $\alpha_i$ and
$\beta_i$ give a symplectic basis of $H^1(X', \CC)$.
Furthermore, the deformation can be chosen such that $\omega$ deforms
to the Poincaré dual class of a point on the genus 1 curve.
Similarly, in the relative theory, the deformations of the relative
points $q_1, \dotsc, q_m$ can be assumed to lie on the genus 1
component.

The degeneration formula is slightly more complicated to write down in
this case since there is a choice for the splitting of the domain
curve into two parts, one for each component of $\tilde X$, and a
choice of splitting $\mu$ at $p$.
For each partition $g = g_1 + g_2 + \ell(\mu) - 1$ of $g$, there is a
gluing map
\begin{equation*}
  \iota\colon \Mbar_{g_1, n_1 + \ell(\eta) + \ell(\mu)} \times \Mbar_{g_2, n_2 + \ell(\mu)} \to \Mbar_{g, n_1 + n_2 + \ell(\eta)},
\end{equation*}
gluing two curves along the last $\ell(\mu)$ markings

Let $M_\omega$, $M_1$, $M_2$ be monomials in insertions of elements in
$\{\omega\}$, $\{\alpha_1, \beta_1\}$ and
$\{\alpha_i, \beta_i | i \neq 1\}$ respectively.
Furthermore, let $M := \tau_{o_H}(1)$ where
\begin{equation*}
  \tau_{o_H}(1) := \prod_{h \in H} \tau_{o_h}(1)
\end{equation*}
be a monomial in insertions of the identity.
After a change of sign, a general GW-class we wish to calculate is of
the form
\begin{equation*}
  \left[ MM_\omega M_1M_2 | \eta_1, \dotsc, \eta_m \right]^X_r.
\end{equation*}
By the degeneration formula this equals
\begin{equation*}
  \sum_{\substack{r_1 + r_2 = r, \\ |\mu| = d, I \subset H}} \mathfrak z(\mu)
  \iota_*\left([ \tau_{o_I}(1) M_\omega M_1 | \eta_1, \dotsc, \eta_m, \mu ]^E_{r_1}, [ \tau_{o_{H \setminus I}}(1) M_2 | \mu ]^{X'}_{r_2}\right).
\end{equation*}

Since the tautological rings are compatible with $\iota$, we can induct
on the genus of $X$ to reduce to the case where $X$ is of genus 1.
Let us fix a symplectic basis $\alpha, \beta$ of $H^1(X, \CC)$ for
this and the following sections.
In this case, we can use a different degeneration to simplify the
problem further.
Namely, $X$ can be degenerated to $X$ with a rational tail.
This can be used to move the $\omega$ insertions and all but one
relative point to the rational tail.

We have therefore reduced the proof of Theorem~\ref{thm:thm} to
showing the following statements.

\begin{theorem}
  \label{thm:red}
  Let $X$ be a curve of genus 1 relative to a point $p$ with
  symplectic basis $\alpha, \beta \in H^1(X, \CC)$.
  Then for every partition $\eta$ of $d$ and any monomial $M$ in
  insertions of identity classes, $\alpha$ and $\beta$ the classes
  \begin{equation*}
    \left[ M| \eta \right]_r^X
  \end{equation*}
  lie in the tautological ring $RH^*(\Mbar_{g, n + \ell(\eta)})$.
  In particular, if the number of insertions of $\alpha$ does not
  equal the number of insertions of $\beta$, the class is zero.
\end{theorem}

\section{Relations}
\label{sec:rel}

In this section we introduce two suitably generalized methods of
\cite{OkPa06c} to produce relations between relative GW-classes of
genus one targets.

\subsection{from monodromy}
\label{sec:mono}

By choosing a suitable loop in the moduli space $\Mbar_{1, 1}$
starting at the point corresponding to $(X, p)$ around the point
corresponding to the nodal elliptic curve, we obtain a deformation of
of $X$ to itself which leaves the even cohomology invariant while it
acts on $H^1(X, \CC)$ via
\begin{equation*}
  \begin{pmatrix}\alpha \\ \beta\end{pmatrix} \mapsto \phi\left(\begin{pmatrix}\alpha \\ \beta\end{pmatrix}\right)
  := \begin{pmatrix}1 & 0 \\ 1 & 1\end{pmatrix} \begin{pmatrix}\alpha \\ \beta\end{pmatrix}.
  = \begin{pmatrix}\alpha \\ \alpha + \beta\end{pmatrix}.
\end{equation*}
In fact, the complete monodromy group acts trivially on the even
cohomology and via the standard $\mathrm{SL}_2(\ZZ)$-representation on
$H^1(X, \CC) \cong \CC^2$.

Because of the deformation invariance of Gromov--Witten theory,
applying this transformation to \emph{all} the descendent insertions
leaves the GW-class invariant.
This gives a relation between GW-classes.

We will use only these relations to establish the vanishing of
unbalanced classes in Section~\ref{sec:unbalanced}.

\medskip

For the proof of Theorem~\ref{thm:red}, we will consider certain
linear combinations of these relations which have a nice form if one
assumes that the vanishing of GW-classes of unbalanced classes has
already been shown.
Let $I$ and $J$ be index sets of the same order and
\begin{equation*}
  \mathbf n\colon I \to \Psi_{\QQ} \qquad
  \mathbf m\colon J \to \Psi_{\QQ}
\end{equation*}
be refined descendent assignments.
Here, a refined descendent assignment is a formal $\QQ$-linear
combination of usual descendent assignments.
Monomials of descendents with such assignments are just expanded
multilinearly.
Refined descendent assignments will only serve as a formal tool here.
We consider the resulting GW-classes to lie in the $\QQ$-vector space
\begin{equation*}
  \bigoplus_{g \ge 0} H^\star(\Mbar_{g, n + \ell(\eta)}).
\end{equation*}
Generalizing the definition of the map $\iota_*$ suitably, we can apply
the degeneration formula also to GW-classes involving refined
descendent assignments.

For a subset $\delta \subset I$ let $S(\delta)$ be the set of all
subsets of $I \sqcup J$ of cardinality $|I|$ containing $\delta$.
For any $D \subseteq I \sqcup J$ we may consider the class
\begin{equation*}
  \tau_{\mathbf n, \mathbf m}(D) := \prod_{i \in I} \tau_{n_i}(\gamma_i^D) \prod_{j \in J} \tau_{m_j}(\gamma_j^D),
\end{equation*}
where
\begin{equation*}
  \gamma_k^D =
  \begin{cases}
    \alpha, & \text{if } k \in D, \\
    \beta, & \text{otherwise.}
  \end{cases}
\end{equation*}
Finally, we will consider a monomial
\begin{equation*}
  N = \prod_{h \in H} \tau_{o_h}(1) \prod_{h' \in H'} \tau_{o'_{h'}}(\omega)
\end{equation*}
in the monodromy invariant insertions.
\begin{proposition}
  The monodromy relation $R(N, \mathbf n, \mathbf m, \delta) = 0$
  holds for any proper subset $\delta \subset I$.
  Here
  \begin{equation*}
    R(N, \mathbf n, \mathbf m, \delta) = \sum_{D \in S(\delta)} \left[ N \tau_{\mathbf n, \mathbf m}(D)\right]_d^X.
  \end{equation*}
\end{proposition}
\begin{proof}
  Consider the application of the monodromy transform $\phi$ to
  \begin{equation*}
    \left[ N \prod_{i \in I} \tau_{n_i}(\gamma_i^\delta) \prod_{j \in J} \tau_{m_j}(\beta)\right]_d^X.
  \end{equation*}
  This class vanishes since it is unbalanced because
  $\delta \subset I$ is a proper subset.
  After applying $\phi$, all terms but those with exactly $|I|$
  insertions of $\alpha$ vanish.
  The sum of these remaining terms is exactly
  $R(N, \mathbf n, \mathbf m, \delta)$.
\end{proof}

\subsection{from the elliptic action}
\label{sec:ell}

Using the group structure of $X$ induced by identifying $X$ with its
Jacobian via a point $0 \in X$ gives another set of relations.

Let the \emph{small diagonal} of $X^r$ be the subset
\begin{equation*}
  \{(x, \dotsc, x): x\in X\} \subset X^r
\end{equation*}
and $\Delta_r \in H^{r - 1}(X^r, \CC)$ be its Poincaré dual.
We will use the fact that $\Delta_r$ is invariant under the diagonal
action of the elliptic curve $X$ on $X^r$, and the Künneth
decomposition of $\Delta_r$ to obtain the relations.

Let $K$ and $H$ be two ordered index sets and $P$ a set partition of
$K$ into subsets of size at least 2.
For any part $p$ of $P$, we have a product evaluation map
\begin{equation*}
  \phi_p\colon \Mbar_{g, K \sqcup H}(X, d) \to X^{|p|}.
\end{equation*}
Let $\mathbf l\colon K \to \Psi$ be an assignment of descendents.
Finally, let $M$ be a monomial in insertions of the identity
\begin{equation*}
  M = \prod_{h \in H} \tau_{o_h}(1).
\end{equation*}
\begin{proposition}
  The elliptic vanishing relation $V(M, P,\mathbf l) = 0$ holds.
  Here
  \begin{equation}
    \label{eq:V}
    V(M, P, \mathbf l) := \pi_*\left(\prod_{h \in H} \psi_h^{o_h} \prod_{k \in K} \psi_k^{l_k} \prod_{p \in P}\phi_p^*(\Delta_{|p|}) \cap [\Mbar_{g, K \sqcup H}(X, d)]^\vir\right).
  \end{equation}
\end{proposition}
Notice that no insertions of $\omega$ appear and that we do not work
in the relative theory.
There is a natural generalization to a more general assignment
$\mathbf l\colon K \to \Psi_{\QQ}$.
\begin{proof}
  The elliptic curve $X$ acts on the moduli space
  $\Mbar_{g, H \sqcup K}(X, d)$ by the action induced from the group
  operation $X \times X \to X$.
  The action can be used to fix the image in $X$ of one marked point
  $q$.
  This gives an $X$-equivariant splitting
  \begin{equation*}
    \Mbar_{g, H \sqcup K}(X, d) \cong \ev_q^{-1}(0) \times X.
  \end{equation*}
  In particular, there exists an algebraic quotient
  \begin{equation*}
    \Mbar_{g, H \sqcup K}(X, d) / X \cong \ev_q^{-1}(0)
  \end{equation*}
  of $\Mbar_{g, H \sqcup K}(X, d)$.

  Notice that the products in \eqref{eq:V} are pulled back via the
  projection map
  \begin{equation*}
    \Mbar_{g, H \sqcup K}(X, d) \to \Mbar_{g, H \sqcup K}(X, d) / X
  \end{equation*}
  from an analogous class on the quotient space.
  Furthermore, the virtual fundamental class is also pulled back from
  the quotient.
  Thus, the push-pull formula applied to the projection map implies
  that the GW-class must vanish.
\end{proof}

To apply these relations, we need to reformulate them as relations
between GW-classes of $X$.
In order to rewrite the $\phi_p$-pullbacks as products of usual
pullbacks via the evaluation maps, we Künneth-decompose the classes
$\Delta_r$.
For $\Delta_2$ and $\Delta_3$ we have for example
\begin{align*}
  \Delta_2 =& 1 \otimes \omega + \omega \otimes 1 - \alpha \otimes \beta + \beta \otimes \alpha, \\
  \Delta_3 =& 1 \otimes \omega \otimes \omega + \omega \otimes 1 \otimes \omega + \omega \otimes \omega \otimes 1 - \omega \otimes \alpha \otimes \beta + \omega \otimes \beta \otimes \alpha\\
  &-\alpha \otimes \omega \otimes \beta + \beta \otimes \omega \otimes \beta - \alpha \otimes \beta \otimes \omega + \beta \otimes \alpha \otimes \omega.
\end{align*}
In general, $\Delta_r$ is a sum
$\Delta_r = \Delta_r^{even} + \Delta_r^{odd}$ where $\Delta_r^{even}$
is the sum of the $r$ classes of the form
\begin{equation*}
  \omega \otimes \dots \otimes \omega \otimes 1 \otimes \omega \dots \otimes \omega
\end{equation*}
and $\Delta_r^{odd}$ is the sum of the $\binom{r}{2}$ linear
combinations of classes
\begin{align*}
  -\omega \otimes \dots \otimes \omega \otimes \alpha \otimes \omega \otimes \dots \otimes \omega \otimes \beta \otimes \omega \otimes \dots \otimes \omega\\
  +\omega \otimes \dots \otimes \omega \otimes \beta \otimes \omega
  \otimes \dots \otimes \omega \otimes \alpha \otimes \omega \otimes
  \dots \otimes \omega.
\end{align*}
We will mostly be interested in the odd summand since the even summand
will usually already be known by an induction hypothesis.

\section{Unbalanced classes}
\label{sec:unbalanced}

In this section, let us fix a monomial $M$ in insertions of even
classes of $X$ and an index set $I$ for the odd insertions.
Our aim is to show that the classes
\begin{equation*}
  C(S) := \left[ M \cdot \prod_{i \in I} \tau_{n_i}(\gamma^{I \setminus S}_i) \Big| \eta\right]^X_r,
\end{equation*}
where
\begin{equation*}
  \gamma^{I \setminus S}_i :=
  \begin{cases}
    \alpha & \text{if }i \notin S \\
    \beta & \text{if }i \in S
  \end{cases}
\end{equation*}
vanish in the unbalanced case, that is when $|I| \neq 2|S|$.

By symmetry, for the proof of the vanishing $C(S) = 0$, we can assume
that $2|S| < |I|$.
We then proceed by induction on $|S|$, starting with the empty case
$|S| < 0$.

Choose a subset $J \subset I$ of size $2|S| + 1$ containing $S$.
For any subset $T \subset J$ of size $|S| + 1$, consider the monodromy
relation
\begin{equation*}
  C(T)
  = \left[ M \cdot \prod_{i \in I} \tau_{n_i}(\phi(\gamma^{I \setminus T}_i)) \Big| \eta \right]^X_r
  = \sum_{S' \subset T} C(S').
\end{equation*}
After subtracting the left-hand side from the right-hand side and
using the induction hypothesis, we obtain the relation
\begin{equation}
  \label{eq:unbrel}
  0 = R(T) := \sum_{\substack{S' \subset T \\ |S'| = |S|}} C(S').
\end{equation}
The relation \eqref{eq:unbrel} can be inverted as
\begin{equation*}
  C(S') = \sum_{i = 0}^{|S|} (-1)^{i + |S|} c_i^{-1} \sum_{\substack{T \subset J \\ |T| = |S| + 1, |T \cap S'| = i}} R(T)
\end{equation*}
where
\begin{equation*}
  c_i := (|S| + 1) \binom{|S|}i \neq 0.
\end{equation*}
Therefore, $C(S') = 0$ for any $S' \subset J$ of size $|S|$.
In particular, we have established the induction step $C(S) = 0$.

\section{Balanced classes}
\label{sec:balanced}

In this section, we will finish the proof of Theorem~\ref{thm:red} in
the remaining case of balanced classes, therefore giving a proof of
Theorem~\ref{thm:thm}.
We follow the discussion of \cite[Section~5.5]{OkPa06c} and try to
keep the notation as similar as possible.
Compared to \cite{OkPa06c}, there is one additional induction on the
codimension.

The following lemma will be used to determine relative GW-classes from
a set of related absolute GW-classes.
Before stating the lemma, we need to introduce a special refined
descendent assignment.

Let $P(d)$ be the set of partitions of $d$ and $\QQ^{P(d)}$ the
$\QQ$-vector space of functions from $P(d)$ to $\QQ$.
Let
\begin{equation*}
  \tilde \tau(\omega) = \sum_{q = 0}^\infty c_q \tau_q(\omega)
\end{equation*}
be a refined descendent of $\omega$.
The Gromov--Witten theory of $\PP^1$ relative to a point gives for
each $v \ge 0$ a function
\begin{equation*}
  \gamma_v\colon P(d) \to \QQ, \quad \eta \mapsto \left\langle \tilde\tau(\omega)^v | \eta\right\rangle^{\PP^1}.
\end{equation*}

\begin{fact}
  There exists a $\QQ$-linear combination $\tilde \tau(\omega)$
  depending on $d$ such that the set of functions
  \begin{equation*}
    \{\gamma_0, \gamma_1, \dotsc\}
  \end{equation*}
  spans $\QQ^{P(d)}$.
\end{fact}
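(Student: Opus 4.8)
The plan is to translate the statement into the representation theory of the symmetric group and then reduce it to a Vandermonde-type nondegeneracy. The essential input is the Gromov--Witten/Hurwitz correspondence of Okounkov--Pandharipande (\cite{MR2199226}, \cite{MR2208418}): in the disconnected stationary theory of $\mathbb P^1$ relative to a single point, after the identification of the degree $d$ relative theory with the center of $\mathbb Q[S_d]$, the descendent $\tau_q(\omega)$ acts as the completed $(q+1)$-cycle. This is a central element, hence diagonal in the basis of irreducible representations $V_\lambda$, $\lambda \vdash d$, with eigenvalue a shifted symmetric power sum $\mathbf p_{q+1}(\lambda)$. Expanding the class sums in the idempotent basis then gives, for any interior insertions, the closed formula
\begin{equation*}
  \left\langle \tau_{q_1}(\omega) \dotsm \tau_{q_v}(\omega) \mid \eta \right\rangle^{\mathbb P^1} = \sum_{\lambda \vdash d} w_\lambda\, \chi^\lambda(\eta) \prod_{i = 1}^v \mathbf p_{q_i + 1}(\lambda), \qquad w_\lambda = \frac{\dim V_\lambda}{d!} \neq 0,
\end{equation*}
where $\chi^\lambda$ is the irreducible character and the genus is the one forced by the dimension constraint. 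The precise normalising constants (powers of $(q+1)!$) only rescale the eigenvalues and are immaterial below.

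Granting this, I would expand $\tilde\tau(\omega)^v$ multilinearly and collect terms. Writing $F(\lambda) = \sum_q c_q\, \mathbf p_{q+1}(\lambda)$ for the eigenvalue of $\tilde\tau(\omega)$ on $V_\lambda$, the formula above yields at once
\begin{equation*}
  \gamma_v(\eta) = \sum_{\lambda \vdash d} w_\lambda\, F(\lambda)^v\, \chi^\lambda(\eta).
\end{equation*}
Since the irreducible characters $\{\chi^\lambda\}_{\lambda \vdash d}$ form a $\mathbb Q$-basis of the space of class functions $\mathbb Q^{P(d)}$, the coordinates of $\gamma_v$ in this basis are exactly the vector $(w_\lambda F(\lambda)^v)_{\lambda \vdash d}$. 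As every weight $w_\lambda$ is nonzero, the family $\{\gamma_v\}_{v \ge 0}$ spans $\mathbb Q^{P(d)}$ if and only if the vectors $(F(\lambda)^v)_{\lambda \vdash d}$, $v = 0, 1, 2, \dotsc$, span $\mathbb Q^{P(d)}$, and by the Vandermonde determinant this holds precisely when $F$ takes pairwise distinct values on the finitely many partitions of $d$.

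It therefore remains to choose the coefficients $c_q$, depending on $d$, so that $\lambda \mapsto F(\lambda)$ separates $P(d)$. This is possible because the shifted power sums themselves separate partitions: the values $\{\mathbf p_r(\lambda)\}_{r \ge 1}$ determine the multiset of shifted parts $\{\lambda_i - i + \tfrac12\}$ and hence $\lambda$, while $\mathbf p_1$ is constant on $P(d)$, so already the $\mathbf p_r$ with $r \ge 2$ distinguish any two distinct partitions of $d$. For each of the finitely many pairs $\lambda \neq \lambda'$ the condition $F(\lambda) = F(\lambda')$ cuts out a proper hyperplane in the space of coefficient tuples $(c_q)$; choosing $(c_q)$, with only finitely many nonzero entries, outside this finite union of hyperplanes makes $F$ injective on $P(d)$. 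The step requiring genuine care is the first one -- verifying the diagonalisation formula, and in particular the nonvanishing of the weights $w_\lambda$ -- since everything afterwards is linear algebra.
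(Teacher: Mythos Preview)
Your argument is correct and is essentially a reconstruction of the proof that the paper defers to: the paper does not prove this Fact itself but simply cites Lemma~5.6 of \cite{MR2208418}, noting that its proof rests on the Gromov--Witten/Hurwitz correspondence of \cite{MR2199225}. What you have written is precisely the outline of that cited argument---diagonalise the stationary $\mathbb P^1$ theory via completed cycles and the character basis, then observe that spanning reduces to a Vandermonde condition which is satisfied once the eigenvalue function $F$ separates partitions, and finally choose the coefficients $c_q$ generically to achieve this. Your caveat at the end is well placed: the only nontrivial input is the operator formula for the cap with its nonvanishing Plancherel weights $w_\lambda = \dim V_\lambda / d!$, and the omitted $1/\mathfrak z(\eta)$ factor on the character side is harmless for the spanning question since it is a nonzero diagonal rescaling of $\mathbb Q^{P(d)}$.
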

\begin{proof}
  This is Lemma~5.6 in \cite{OkPa06c}.
  Its proof uses the Gromov--Witten Hurwitz correspondence
  \cite{OkPa06a}.
\end{proof}

We will fix such a refined descendent assignment
$\tilde \tau(\omega)$.
Let us define
\begin{equation*}
  \tilde \psi = \sum_{q = 0}^\infty c_q \psi^q
\end{equation*}
so that formally $\tilde \tau(\omega) = \tau_{\tilde \psi}(\omega)$.

\begin{lemma}
  \label{lem:absrel}
  Let $M$, $L$, $A$, $B$ be monomials in insertions of $1$, $\omega$,
  $\alpha$ and $\beta$ respectively,
  \begin{equation*}
    M = \prod_{h \in H} \tau_{o_h}(1), \quad
    L = \prod_{h' \in H'} \tau_{o'_{h'}}(\omega), \quad
    A = \prod_{i \in I} \tau_{n_i}(\alpha), \quad
    B = \prod_{j \in J} \tau_{m_j}(\beta),
  \end{equation*}
  and let $\eta \in P(d)$ be a splitting.
  Then the GW-classes
  \begin{equation*}
    [MAB | \eta]_r^X, \quad
    [MLAB]_{r, d}^X
  \end{equation*}
  are tautological if the classes
  \begin{equation*}
    [M'\tilde\tau(\omega)^v AB]_{r, d}^X, \quad
    [M' AB | \mu]_{r'}^X
  \end{equation*}
  are tautological for arbitrary $v \ge 0$, $r' \le r$, $\mu \in P(d)$
  and divisors $M'$ of $M$ except (possibly) in the case $r'
  = r$, $M' = M$.
\end{lemma}
\begin{proof}
  We first study the case $M = 1$, $r = 0$.
  There is a degeneration of $X$ into $X \cup_{\mathbf{pt}} \PP^1$ we
  have already studied in Section~\ref{sec:red}.
  The corresponding degeneration formula spells here
  \begin{equation*}
    [\tilde\tau(\omega)^v AB]_{0, d}^X =
    \sum_{|\eta| = d} \mathfrak z(\eta) \iota_*\left([AB | \eta]_0^X , \left[ \tilde\tau(\omega)^v | \eta\right]_0^{\PP^1}\right).
  \end{equation*}
  By the Fact, letting $v$ vary this determines $[AB | \eta]_0^X$ for
  all $\eta$.
  The degeneration formula
   \begin{equation*}
     [LAB]_{0, d}^X = \sum_{|\eta| = d} \mathfrak z(\eta)
     \iota_\star\left( [LAB | \eta]_0^X, \left[
         \tilde\tau(\omega)^v | \eta\right]_0^{\PP^1}\right).
  \end{equation*}
  then determines the second kind of GW-class if $M = 1$, $r = 0$.

  In general, there are additional sums in the degeneration formula:
  one for the distribution of the factors of $M$ and one for the
  splitting of the domain curve.
  However by the hypothesis of the lemma and the fact that we already
  have shown the tautologicalness of GW-classes of $\PP^1$, only the
  summand corresponding to the distribution of all of $M$ to $X$ and
  all of $r$ to $X$ may be non-tautological.
  But then we can mirror the above argument in the simple case.
\end{proof}

\subsection{Simple case}

To illustrate the principle of the proof, we start with the GW-classes
with only two odd insertions (one of each $\alpha$ and $\beta$).
So for descendent assignments $n$, $m$, a monomial of identity
insertions
\begin{equation*}
  M = \prod_{h \in H} \tau_{o_h}(1)
\end{equation*}
and the choice of splitting $\mu$ for the relative point, we wish to
determine
\begin{equation*}
  \left[ M \tau_n(\alpha) \tau_m(\beta) | \mu\right]_r^X
\end{equation*}
in terms of GW-classes with only even insertions.
By induction on $r$ and $M$, we will assume that this statement has
already been proven for all $r' \le r$ and $M' | M$ except (possibly)
in the case $r' = r$, $M' = M$.

Let $K_v$ be an index set with $v + 2$ elements.
We first look at the elliptic vanishing relation
$V(M, \{K_v\}, \mathbf l)$ where $\mathbf l$ assigns $\tilde \psi$ to
every element of $K_v$.
The relation contains $2\binom{v + 2}{2}$ summands which contain odd
classes, and in fact since the descendent assignment is identical for
each element of $K_v$, each of them is equal to
\begin{equation*}
  -\left[ M \tilde\tau(\omega)^v \tilde\tau(\alpha) \tilde\tau(\beta)\right]_{r, d}^X,
\end{equation*}
which we thus have determined in terms of even GW-classes.

Lemma~\ref{lem:absrel} and the induction hypothesis yield the
determination of the classes
\begin{equation}
  \label{eq:easy}
  \left[ M \tilde\tau(\alpha) \tilde\tau(\beta) | \eta\right]_r^X,\ \left[ M L \tilde\tau(\alpha) \tilde\tau(\beta)\right]_{r, d}^X
\end{equation}
for any monomial $L$ in descendents of $\omega$.

Next, we look at the elliptic vanishing relation
$V(M, \{K_v\}, \mathbf l)$ where this time the descendent assignment
$\mathbf l$ takes the value $\tilde \psi$ at all but the first element
of $K_v$ where it takes the value $\psi^n$.
The even terms are still of no relevance but now there are four kinds
of odd summands.
They are
\begin{align*}
  -(v + 1) \left[ M \tilde\tau(\omega)^v \tau_n(\alpha) \tilde\tau(\beta)\right]_{r, d}^X  \\
  + (v + 1) \left[ M \tilde\tau(\omega)^v \tau_n(\beta) \tilde\tau(\alpha)\right]_{r, d}^X \\
  -\binom{v + 1}{2}\left[ M \tilde\tau(\omega)^{v - 1}\tau_n(\omega)
    \tilde\tau(\alpha) \tilde\tau(\beta)\right]_{r, d}^X \\
  + \binom{v + 1}{2}\left[ M \tilde\tau(\omega)^{v - 1} \tau_n(\omega)
    \tilde\tau(\beta) \tilde\tau(\alpha)\right]_{r, d}^X.
\end{align*}
We are only interested in the first pair of summands since the second
two are determined by \eqref{eq:easy}.
By applying the relation
$R(M \tilde\tau(\omega)^v, \{\psi^n\}, \{\tilde\psi\}, \emptyset)$, we
see that the first two summands are equal.
Therefore, we now know
\begin{equation*}
  \left[ M \tilde\tau(\omega)^v \tau_n(\alpha) \tilde\tau(\beta)\right]_{r, d}^X
\end{equation*}
and by Lemma~\ref{lem:absrel} also
\begin{equation}
  \label{eq:left}
  \left[ M \tau_n(\alpha) \tilde\tau(\beta) | \eta\right]_r^X, \left[ M L \tau_n(\alpha) \tilde\tau(\beta)\right]_{r, d}^X.
\end{equation}

Repeating this argument, we successively determine
\begin{align}
  \left[ M \tilde\tau(\alpha) \tau_m(\beta) | \eta\right]_r^X, \left[ M L \tilde\tau(\alpha) \tau_m(\beta)\right]_{r, d}^X, \label{eq:right}\\
  \left[ M \tau_n(\alpha) \tau_m(\beta) | \eta\right]_r^X \label{eq:full}.
\end{align}
For \eqref{eq:right}, we need the elliptic vanishing relation
$V(M, \{K_v\}, \mathbf l)$, where $\mathbf l$ takes the value
$\tilde\psi$ on all but the last elements of $K_v$ where it is
$\psi^m$.
As before, two terms in this relation are not yet determined and these
are proportional to each other by the monodromy relation
$R(M \tilde\tau(\omega)^v, \{\tilde\psi\}, \{\psi^m\}, \emptyset)$.

For \eqref{eq:full}, we use the relation $V(M, \{K_v\}, \mathbf l)$
with $\mathbf l$ having the value $\tilde\psi$ on all but the first
and the last element of $K_v$ where it takes the values $n$ and $m$,
respectively.
To see that there is only a pair of not yet determined terms, we in
particular need to use \eqref{eq:left} and \eqref{eq:right}.
We finish with the use of the relation
$R(M \tilde\tau(\omega)^v, \{\psi^n\}, \{\psi^m\}, \emptyset)$.

\subsection{General case}

Let $I$ and $J$ be two ordered index sets of the same size and let
\begin{equation*}
  \mathbf n\colon I \to \Psi_\QQ, \quad \mathbf m\colon J \to \Psi_\QQ
\end{equation*}
be general descendent assignments.
In order to prove Theorem~\ref{thm:red}, we need to calculate for a
monomial $M$ in insertions of the identity, the GW-classes
\begin{equation*}
  \left[ M \prod_{i \in I} \tau_{n_i}(\alpha) \prod_{j \in J} \tau_{m_j}(\beta) | \eta\right]_{r, d}^X
\end{equation*}
in terms of lower GW-classes.
This follows from the following lemma.
\begin{lemma}
  For $s, t \ge 0$ the GW-classes
  \begin{align*}
    \left[ M \prod_{i \le s} \tau_{n_i}(\alpha) \prod_{s < i \in I} \tilde\tau(\alpha) \prod_{J \ni j \le t} \tau_{m_j}(\beta) \prod_{t < j} \tilde\tau(\beta) | \eta\right]_r^X, \\
    \left[ ML \prod_{i \le s} \tau_{n_i}(\alpha) \prod_{s < i \in I} \tilde\tau(\alpha) \prod_{J \ni j \le t} \tau_{m_j}(\beta) \prod_{t < j} \tilde\tau(\beta)\right]_{r, d}^X,
  \end{align*}
  for an arbitrary monomial $L$ in insertions of point classes
  $\omega$ are determined in terms of the GW-classes with strictly
  less insertions as well as
  \begin{align}
    \left[ M' \prod_{i \le s'} \tau_{n_i}(\alpha) \prod_{s' < i \in I} \tilde\tau(\alpha) \prod_{J \ni j \le t'} \tau_{m_j}(\beta) \prod_{t' < j} \tilde\tau(\beta) | \eta\right]_{r'}^X, \nonumber \\
    \left[ M'L' \prod_{i \le s'} \tau_{n_i}(\alpha) \prod_{s' < i \in I} \tilde\tau(\alpha) \prod_{J \ni j \le t'} \tau_{m_j}(\beta) \prod_{t' < j} \tilde\tau(\beta)\right]_{r', d}^X, \label{eq:genpt}
  \end{align}
  where $L'$ is an arbitrary monomial in insertions of $\omega$ and we
  have $(r', s', t', M') < (r, s, t, M)$.
  Here we have used the partial order defined by
  $(r', s', t', M') \le (r, s, t, M)$ if and only if $r' \le r$,
  $s' \le s$, $t' \le t$ and $M' | M$.
\end{lemma}
\begin{proof}
  We need additional notation.
  For $v \ge 0$ let $W$ be an index set of cardinality $v$.
  Define $K_v$ by
  \begin{equation*}
    K_v = I \sqcup W \sqcup J
  \end{equation*}
  with order implicit in the notation.
  Let $\mathbf l_{f[s]l[t]}\colon K_v \to \Psi_\QQ$ be the descendent
  assignment with
  \begin{equation*}
    \mathbf l_{f[s]l[t]}(k) =
    \begin{cases}
      n_k, & \text{if }k\text{ is one of the first }s\text{ elements of }I, \\
      m_k, & \text{if }k\text{ is one of the first }t\text{ elements of }J, \\
      \tilde\psi, & \text{else.}
    \end{cases}
  \end{equation*}
  We call the $s$ first elements of $I \subset K_v$ and the $t$ first
  elements of $J \subset K_v$ \emph{special} elements of $K_v$ with
  respect to $(s, t)$.

  Let $\sigma\colon I \to J$ be a bijection, which we can, using the
  orders on $I$ and $J$, also interpret as a permutation of $I$.
  Let $P_\sigma$ be the set partition of $K_v$ with first part
  $\{1, \sigma(1)\} \cup W$ and pairs $\{i, \sigma(i)\}$ as the other
  parts.

  Consider the relations $V(M, P_\sigma, \mathbf l_{f[s], l[t]})$ for
  varying $\sigma$.
  By the induction hypothesis, we only need to consider the terms from
  the Künneth decomposition with exactly $|I| + |J|$ odd insertions.
  After expanding the product, there are
  $2 \cdot \binom{v + 2}{2} \cdot 2^{|I| - 1}$ terms of this kind.
  If we consider the odd part of the Künneth decomposition
  corresponding to the part $\{1, \sigma(1)\} \cup W$ of $P$ in more
  detail, we see that depending on the $s$, $t$ and $\sigma(1)$ still
  different kinds of terms might occur.
  We only need to take into account the terms with the least possible
  amount of point classes $\omega$ distributed to the special elements
  of $K_v$ with respect to $(s, t)$ since all possible other terms
  are of the form \eqref{eq:genpt} for
  \begin{equation*}
    (s', t') \in \{(s - 1, t), (s, t - 1), (s - 1, t - 1)\}.
  \end{equation*}
  The remaining terms occur still with a combinatorial multiplicity
  $C_\sigma$ depending on the number of special elements in
  $\{1, \sigma(1)\}$.
  These multiplicities are
  \begin{equation*}
    C_\sigma =
    \begin{cases}
      1, & \text{if }\{1, \sigma(1)\}\text{ contains 2 special elements,} \\
      v + 1, & \text{if }\{1, \sigma(1)\}\text{ contains 1 special elements,} \\
      \binom{v + 2}{2}, & \text{if }\{1, \sigma(1)\}\text{ contains 0
        special elements.}
    \end{cases}
  \end{equation*}
  The last case can only occur if $s = 0$.

  Let $V$ be the relation obtained by summing these relations over
  all permutations $\sigma$ and weighting with $C_\sigma^{-1}$ and a
  sign
  \begin{equation*}
    \sum_{\sigma} (-1)^{\binom{|I|}2} \sign(\sigma) C_\sigma^{-1} V(M, P_\sigma, \mathbf l_{f[s], l[t]}),
  \end{equation*}
  and removing terms determined by the induction hypothesis or of the
  form \eqref{eq:genpt} for $(s', t')$ as before.
  Using the notation from Section~\ref{sec:mono}, we can write
  \begin{multline*}
    V = \sum_{\delta \subset I} \sum_{D \in S^*(\delta)} (-1)^{|I| - |\delta|}|\delta|!(|I| - |\delta|)! \\
    \Big[ M \tilde\tau(\omega)^v \prod_{i \le s} \tau_{n_i}(\gamma_i^D) \prod_{s < i \in I} \tilde\tau(\gamma_i^D) \prod_{J \ni j \le t} \tau_{m_j}(\gamma_j^D) \prod_{t < j} \tilde\tau(\gamma_j^D) \Big]_{r, d}^X,
  \end{multline*}
  where $S^*(\delta)$ denotes the set of all subsets of $I \sqcup J$
  such that $D \cap I = \delta$.
  Using the substitution
  \begin{multline*}
    e_k = \sum_{|\delta| = k} \sum_{D \in S^*(\delta)} \\
    \Big[ M \tilde\tau(\omega)^v \prod_{i \le s} \tau_{n_i}(\gamma_i^D) \prod_{s < i \in I} \tilde\tau(\gamma_i^D) \prod_{J \ni j \le t} \tau_{m_j}(\gamma_j^D) \prod_{t < j} \tilde\tau(\gamma_j^D) \Big]_{r, d}^X,
  \end{multline*}
  we can write $V$ more compactly as
  \begin{equation*}
    V = \sum_{k = 0}^{|I|} (-1)^{|I| - k}k!(|I| - k)!\ e_k.
  \end{equation*}
  We wish to eliminate $e_0, \dotsc, e_{|I| - 1}$ from $V$ in order to
  obtain a formula for
  \begin{equation*}
    e_{|I|} = \left[ M \tilde\tau(\omega)^v \prod_{i \le s} \tau_{n_i}(\alpha) \prod_{s < i \in I} \tilde\tau(\alpha) \prod_{J \ni j \le t} \tau_{m_j}(\beta) \prod_{t < j} \tilde\tau(\beta)\right]_{r, d}^X.
  \end{equation*}

  Let $R(\ell)$ be the sum
  \begin{equation*}
    R(\ell) = \sum_{|\delta| = \ell} R(M \tilde\tau(\omega)^v, \mathbf n', \mathbf m', \delta),
  \end{equation*}
  where here $\mathbf n'$ and $\mathbf m'$ are the restrictions of
  $\mathbf l_{f[s], l[t]}$ to $I$ and $J$, respectively.
  Since unbalanced GW-classes vanish, we have the expansion
  \begin{multline*}
    R(\ell) = \sum_{|\delta| \ge \ell} \sum_{D \in S^*(\delta)} \binom{|\delta|}\ell \\
    \Big[ M \tilde\tau(\omega)^v \prod_{i \le s}
    \tau_{n_i}(\gamma_i^D) \prod_{s < i \in I} \tilde\tau(\gamma_i^D)
    \prod_{J \ni j \le t} \tau_{m_j}(\gamma_j^D) \prod_{t < j}
    \tilde\tau(\gamma_j^D)
    \Big]_{r, d}^X \\
    = \sum_{k \ge \ell} \binom k\ell e_k.
  \end{multline*}
  The following lemma in linear algebra gives us the formula for the
  desired $e_{|I|}$

  \begin{lemma}
    \label{lem:linalg}
    Let $e_0, \dotsc, e_n$ be a basis of the vector space
    $\QQ^{n + 1}$.
    Then the vectors
    \begin{equation*}
      V = \sum_{k = 0}^n (-1)^{n - k}k!(n - k)! e_k
    \end{equation*}
    and
    \begin{equation*}
      R(\ell) = \sum_{k \ge \ell} \binom{k}{\ell} e_k
    \end{equation*}
    for $0 \le \ell < n$ form a basis of $\QQ^{n + 1}$.
  \end{lemma}
  \begin{proof}
    Note that by formally extending the definition of $R(\ell)$ to
    $R(n)$, we obtain an $(n + 1) \times (n + 1)$ lower uni-triangular
    matrix $R$ with coefficients
    \begin{equation*}
      R_{ab} = \binom{a}{b}.
    \end{equation*}
    $R$ is therefore invertible and the coefficients of its inverse
    $R^{-1}$ are
    \begin{equation*}
      (R^{-1})_{ab} = (-1)^{a + b} \binom{a}{b}.
    \end{equation*}
    In particular, the $R(0), \dotsc, R(n - 1)$ are linearly
    independent.
    In order to show that $V$ is not a linear combination of these
    vectors, we expand $V$ in terms of the basis corresponding to $R$
    \begin{equation*}
      V = \sum_{\ell = 0}^n c_\ell R(\ell)
    \end{equation*}
    and check that the coefficient $c_n$ is nonzero:
    \begin{equation*}
      c_n = \sum_{k = 0}^n (-1)^{n + k} \binom{n}{k} (-1)^{n - k}k!(n - k)! = (n + 1)!
    \end{equation*}
  \end{proof}

  We next apply Lemma~\ref{lem:absrel} to determine
  \begin{equation*}
    \left[ M \prod_{i \le s} \tau_{n_i}(\alpha) \prod_{s < i \in I} \tilde\tau(\alpha) \prod_{J \ni j \le t} \tau_{m_j}(\beta) \prod_{t < j} \tilde\tau(\beta) | \eta\right]_r^X
  \end{equation*}
  using the induction hypothesis for the $r$ induction.

  By a degeneration argument as in the simple case, we finally obtain a
  formula for
  \begin{equation*}
    \left[ ML \prod_{i \le s} \tau_{n_i}(\alpha) \prod_{s < i \in I} \tilde\tau(\alpha) \prod_{J \ni j \le t} \tau_{m_j}(\beta) \prod_{t < j} \tilde\tau(\beta)\right]_{r, d}^X.
  \end{equation*}
\end{proof}

\printbibliography
\addcontentsline{toc}{section}{References}

\vspace{+8 pt}
\noindent
Department of Mathematics, University of Michigan, 2074 East Hall, 530 Church Street, Ann Arbor, MI 48109, USA \\
janda@umich.edu

\end{document}